\newtheorem{lem}{Lemma}[section]
\newtheorem{thm}[lem]{Theorem}
\newtheorem{cor}[lem]{Corollary}
\newtheorem{pro}[lem]{Proposition}
\begin{document}

\title{On the spanning connectivity of tournaments}
\author{ Bo zhang, \quad Weihua Yang\footnote{Corresponding author. E-mail: ywh222@163.com, yangweihua@tyut.edu.cn.},\quad Shurong  Zhang\\
\small Department of Mathematics, Taiyuan University of
Technology, Taiyuan 030024,
China\\
}
\date{}
\maketitle

{\small{\bf Abstract.} \quad  Let $D$ be a digraph. A $k$-container of $D$ between $u$ and $v$, $C(u,v)$, is a set of $k$ internally disjoint paths between $u$ and $v$. A $k$-container $C(u,v)$ of $D$ is a strong (resp. weak) $k^{*}$-container if there is a set of $k$ internally disjoint paths with the same direction (resp. with different directions allowed) between $u$ and $v$ and it contains all vertices of $D$.  A digraph $D$ is $k^{*}$-strongly (resp. $k^{*}$-weakly) connected if there exists a strong (resp. weak) $k^{*}$-container between any two distinct vertices. We define the strong (resp. weak) spanning connectivity of a digraph $D$, $\kappa_{s}^{*}(D)$ (resp. $\kappa_{w}^{*}(D)$ ), to be the largest integer $k$ such that $D$ is $\omega^{*}$-strongly (resp. $\omega^{*}$-weakly) connected for all $1\leq \omega\leq k$ if $D$ is a $1^{*}$-strongly (resp. $1^{*}$-weakly) connected. In this paper, we show that a tournament with $n$ vertices and irregularity $i(T)\leq k$, if $n\geq6t+5k$ $(t\geq2)$, then $\kappa_{s}^{*}(T)\geq t$ and $\kappa_{w}^{*}(T)\geq t+1$ if $n\geq6t+5k-3$ $(t\geq2)$.

\vskip 0.5cm Keywords: Hamiltonian path; Connectivity; Spanning connectivity; Bypass; Tournament
\section{Introduction}

For terminology not explicitly introduced here, we refer to \cite{bang,four}. A digraph $D$ consists of a set $V(D)$ of vertices and a set $A(D)$ of order pairs $xy$ of distinct vertices called arcs, if $xy$ is an arc of $D$, we say that $x$ dominates $y$. A tournament $T$ is a digraph such that each pair of vertices is joined by precisely one arc. A Hamiltonian path (resp. cycle) of a tournament is a path (resp. cycle) including all vertices of $T$. Bang-Jensen, Gutin and Huang \cite{eleven} obtained some characterization for the existence of a Hamiltonian $(x,y)$-path in an extended tournament. If $P=x_{1}x_{2}...x_{m}$ is a path in a tournament $T$ and $x$ is a vertex not in the path such that $x$ is dominated by some $x_{i}$ and dominates some $x_{j}$ with $i<j$, then $T$ contains an $(x_{1},x_{m})$-path $P^{'}=x_{1}x_{2}...x_{k}xx_{k+1}...x_{m}$ where $i\leq k\leq j-1$. We say that $P^{'}$ is an $augmentation$ of $P$. By Menger's theorem \cite{two}, a tournament $T$  is $k$-strongly connected (or called $k$-strong) if and only if for each ordered pair $x$,$y$ of vertices, $T$ contains $k$ internally disjoint paths from $x$ to $y$.
When $T$ is not strong, and let $T_{1}$, $T_{2}$, ..., $T_{k}$ be the components. Without loss of generality, we may assume that whenever $i<j$, each vertex of $T_{i}$ dominates each vertex of $T_{j}$. We refer to $T_{i}$ as the $i'$th component of $T$, and to $T_{1}$ and $T_{k}$ as the initial and terminal components respectively. When $T$ is strong, $T$ has a Hamiltonian cycle. If $uv\in A(T)$ and $P$ is a $(u,v)$-path of length $k$, then $P$ is called a $k$-$bypass$ of $uv$. Alspach et al. in \cite{one} investigates bypasses in asymmetric digraphs. Zhang and Wu \cite{six} investigates conditions for arc-3-cyclicity in tournaments. Guo et al. in \cite{seven} investigates bypasses in tournaments.

We denote the in-degree and out-degree of vertex $x$ by $d^{-}(x)$ and $d^{+}(x)$ respectively. The $irregularity$ $i(T)$ of a tournament $T$ is the maximum $|d^{+}(x)-d^{-}(x)|$ over all vertices $x$ of $T$. Clearly, a tournament is regular if $i(T)=0$. If $i(T)\not=0$, the $T$ contains a vertex of in-degree at least $\frac{1}{2}(n-1)$ and a vertex of out-degree at least $\frac{1}{2}(n-1)$. Every vertex of $T$ has out-degree at least $\frac{1}{2}(n-1-i(T))$.

In an undirected graph $G$, a $k$-container of $G$ between $u$ and $v$, $C(u,v)$, is a set of $k$ internally disjoint paths between $u$ and $v$. The concept of container is proposed by Hsu in \cite{eight} to evaluate the performance of communication of an interconnection networks. A $k$-container $C(u,v)$ of $G$ is a $k^{*}$-container if it contains all vertices of $G$. A graph $G$ is $k^{*}$-connected if there exists a $k^{*}$-container between any two distinct vertices of $G$. The study of $k^{*}$-connected graph is motivated by the globally $3^{*}$-connected graphs proposed by Albert et al. in \cite{nine}. Lin et al. in \cite{ten} proved that the pancake graph $P_{n}$ is $w^{*}$-connected for any $w$ with $1\leq w\leq n-1$ if and only if $n\neq 3$. Later, they apply the concept to discuss the spanning connectivity of graphs in \cite{three} and discuss the spanning fan-connectivity of graphs in \cite{five}.

In this paper, we generalize the concepts above to digraphs and we consider the spanning connectivity of tournaments. Let $D$ be a digraph. A $k$-container of $D$ between $u$ and $v$, $C(u,v)$, is a set of $k$ internally disjoint paths between $u$ and $v$. A $k$-container $C(u,v)$ of $D$ is a strong (resp. weak) $k^{*}$-container if there is a set of $k$ internally disjoint paths with the same direction (resp. with different directions allowed) between $u$ and $v$ i.e. they have either $k$ $(u,v)$-paths or $k$ $(v,u)$-paths and it contains all vertices of $D$.  A digraph $D$ is $k^{*}$-strongly (resp. $k^{*}$-weakly) connected if there exists a strong (resp. weak) $k^{*}$-container between any two distinct vertices. We define the strong (resp. weak) spanning connectivity of a digraph $D$, $\kappa_{s}^{*}(D)$ (resp. $\kappa_{w}^{*}(D)$ ), to be the largest integer $k$ such that $D$ is $\omega^{*}$-strongly (resp. $\omega^{*}$-weakly) connected for all $1\leq \omega\leq k$ if $D$ is a $1^{*}$-strongly (resp. $1^{*}$-weakly) connected. We prove that for $k\geq 1$, a $2k$-strong tournament which has at least one path of length 2 between any vertices $x$ and $y$ is $(k+2)^{*}$-weakly connected, and that for $k\geq 0$, a $(2k+1)$-strong tournament is $(k+2)^{*}$-weakly connected. Further, for $k\geq 2$, we prove that a $2k$-strong tournament is $k^{*}$-strongly connected, and that a $(2k+1)$-strong tournament which has at least one 2-bypass for each arc is $(k+1)^{*}$-strongly connected. Finally, we also prove that a tournament with $n$ vertices and the irregularity $i(T)\leq k$, if $n\geq6t+5k$ $(t\geq2)$, then $\kappa_{s}^{*}(T)\geq t$ and that a tournament with $n$ vertices and the irregularity $i(T)\leq k$, if $n\geq6t+5k-3$ $(t\geq2)$, then $\kappa_{w}^{*}(T)\geq t+1$.

\section{$k^{*}$-weakly connected tournaments}

Several results proved in \cite{four} are useful in this paper, as follows.

\begin{thm}[\cite{four}]\label{1} Let $T$ be a tournament and $x$, $y$ distinct vertices of $T$. Then $T$ has a Hamiltonian path from $x$ to $y$ or from $y$ to $x$ unless one condition of $(i)-(iv)$ below is satisfied, in which case $T$ has no Hamiltonian path connecting $x$ and $y$.

$(i)$ $T$ is not strong and either the initial or the terminal component of $T$ (or both) contains none of $x$, $y$.

$(ii)$ $T$ is strong, $T-x$ is not strong, and $y$ belongs to neither the initial nor the terminal component of $T-x$.

$(iii)$ $T$ is strong, $T-y$ is not strong, and $x$ belongs to neither the initial nor the terminal component of $T-y$.

$(iv)$ $T$ is isomorphic to $T_{6}^{s}$ or $\overline{T}_{6}^{s}$ and $x$ and $y$ are as shown in Fig.2 (or interchanged on this figure).
\end{thm}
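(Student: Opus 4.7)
The plan is to establish this characterization by strong induction on $n=|V(T)|$, using the augmentation operation recalled in the introduction as the central tool. Necessity of conditions $(i)$--$(iv)$ is routine: in each of $(i)$--$(iii)$ a connectivity cut separating $x$ and $y$ from a required component obstructs any Hamiltonian path between them, and in $(iv)$ one checks the two $6$-vertex tournaments directly. So the work is to construct the path when none of $(i)$--$(iv)$ holds.

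Base cases are handled by verifying the statement for $|V(T)|\leq 6$ (and $7$ if needed) by direct analysis, during which $T_{6}^{s}$ and $\overline{T}_{6}^{s}$ emerge as the only genuine obstructions among small strong tournaments. For the non-strong inductive step, list the strong components $T_1,\dots,T_k$ in the dominance order. Because $(i)$ fails, each of $T_1$ and $T_k$ contains at least one of $x,y$. Apply the induction hypothesis inside each $T_i$ to obtain Hamiltonian paths whose endpoints match the requirement in $T_1$ and $T_k$, and splice them together using arcs from $T_i$ to $T_{i+1}$. A short case distinction, according to whether $x,y$ both sit in an extremal component or are split between $T_1$ and $T_k$, completes this step.

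Now assume $T$ is strong and consider $T-y$. If $T-y$ is strong, induction supplies a Hamiltonian path of $T-y$ from $x$ to some $z$, and by choosing $z\in N^{-}(y)$ or $z\in N^{+}(y)$ we extend by the arc $zy$ or $yz$. If $T-y$ is not strong, failure of $(iii)$ forces $x$ into the initial or terminal component of $T-y$, so we can again stitch together a Hamiltonian path through $y$ using the component-wise inductive paths. Symmetric reasoning with $T-x$ and condition $(ii)$ covers the remaining placements.

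The main obstacle is the sub-case in which the inductively produced Hamiltonian path of $T-x$ or $T-y$ does not have its free endpoint in the correct neighborhood of the deleted vertex, so the naive one-arc extension fails. Here augmentation is indispensable: beginning with a near-Hamiltonian path between $x$ and $y$ and inserting a missing vertex $v$ via indices $i<j$ with $x_i\to v\to x_j$, one repositions the endpoints and absorbs the remaining vertex. The technical heart of the proof is to show that augmentation always succeeds unless the structure of $T$ is so rigid that $T\cong T_{6}^{s}$ or $\overline{T}_{6}^{s}$ with $x,y$ placed as in Fig.~2, thereby forcing exception $(iv)$; pinning down this rigidity is where the bulk of the structural case analysis lives and where one must be most careful to avoid overlooking a genuine exception or discarding a legitimate Hamiltonian path.
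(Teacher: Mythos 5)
This theorem is quoted by the paper from Thomassen's article \cite{four} and is not proved here, so there is no in-paper argument to compare against; I can only assess your sketch on its own terms. The skeleton you describe (induction on $|V(T)|$, reduction to the strong-component decomposition when $T$ is not strong, deletion of $x$ or $y$ when $T$ is strong, and augmentation to absorb leftover vertices) is indeed the right general strategy, and your treatment of necessity and of the non-strong case is essentially correct, since failure of $(i)$ forces $x$ and $y$ into the two extremal components and every vertex of a strong tournament starts (and ends) a Hamiltonian path.

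However, the inductive step as written has a genuine gap. When $T-y$ is strong, the induction hypothesis applied to $T-y$ and the pair $x,z$ only yields a Hamiltonian path from $x$ to $z$ \emph{or} from $z$ to $x$, with no control over the direction; so ``choosing $z\in N^{-}(y)$ and extending by the arc $zy$'' is not available as stated --- you may be handed a path that starts rather than ends at $z$, and $y$ cannot then be appended. Fixing this requires auxiliary results about Hamiltonian paths with one prescribed endpoint lying in a prescribed set (e.g.\ that a strong tournament has a Hamiltonian path ending in $N^{-}(y)$ under suitable hypotheses), which is exactly the machinery Thomassen builds. Similarly, when $T-y$ is not strong, ``stitching'' needs a Hamiltonian path of the terminal component of $T-y$ that ends at an in-neighbour of $y$, which is again nontrivial. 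Finally, you explicitly defer the core of the proof --- showing that augmentation succeeds except precisely for $T_{6}^{s}$ and $\overline{T}_{6}^{s}$ --- and that rigidity analysis is not a routine afterthought but the bulk of the original twenty-page argument. As it stands the proposal is an accurate roadmap, not a proof.
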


\begin{cor}[\cite{four}]\label{2} For any three vertices of a strong tournament there is a Hamiltonian path connecting two of them.
\end{cor}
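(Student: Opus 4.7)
I would argue by contradiction using Theorem~\ref{1}. Let $x,y,z$ be three distinct vertices of a strong tournament $T$, and suppose no pair among $\{x,y\}$, $\{x,z\}$, $\{y,z\}$ is joined by a Hamiltonian path in either direction. Since $T$ is strong, condition (i) of Theorem~\ref{1} is ruled out for every pair, so each pair must fall under (ii), (iii), or (iv). The exceptional case (iv), which involves only $T_{6}^{s}$ and $\overline{T}_{6}^{s}$, can be dispatched by direct inspection on those two specific tournaments of order six. Hence for every pair $\{u,v\}$ exactly one of (ii), (iii) applies, which I encode by the relation $u \succ v$ meaning: \emph{$T-u$ is not strong, and $v$ lies in a strong component of $T-u$ that is neither initial nor terminal.} The relation $\succ$ covers each of the three pairs in at least one direction.

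Counting the at least three $\succ$-arcs on three vertices, some vertex has total $\succ$-degree at least two. The principal subcase—which, after possibly passing to the arc-reversed tournament, I may assume—is that some vertex, relabelled $x$, satisfies $x\succ y$ and $x\succ z$. (The residual possibility, in which $\succ$ forms a directed $3$-cycle on $\{x,y,z\}$, can be ruled out separately by checking that it contradicts strongness of $T$.) Then $T-x$ decomposes into strong components $T_{1}\Rightarrow T_{2}\Rightarrow\cdots\Rightarrow T_{p}$ with $p\geq 3$, and $y\in T_{i}$, $z\in T_{j}$ with $2\leq i,j\leq p-1$. Since $T$ is strong, $x$ has an out-neighbour $w\in T_{1}$ and an in-neighbour $u\in T_{p}$; each $T_{\ell}$, being a strong tournament, contains Hamiltonian paths with any prescribed start or end vertex (via its Hamiltonian cycle); and arcs between distinct $T_{\ell}$'s all go from lower to higher index, so transitions across component boundaries are free.

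In the case $i<j$ (and symmetrically $i>j$) I would manufacture a Hamiltonian $(y,z)$-path of $T$ by concatenating: a Hamiltonian path of $T_{i}$ starting at $y$; Hamiltonian paths of $T_{i+1},\ldots,T_{p-1}$; a Hamiltonian path of $T_{p}$ ending at $u$; the arcs $u\to x\to w$; Hamiltonian paths of $T_{1},\ldots,T_{j-1}$ starting at $w$; and a Hamiltonian path of $T_{j}$ ending at $z$. This directly contradicts our hypothesis that no Hamiltonian path connects any pair.

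The main obstacle is the case $i=j$, when $y$ and $z$ inhabit the same middle component $T_{i}$, since one cannot then traverse $T_{i}$ in a single stretch. Here I would leave $T_{i}$ immediately from $y$ into $T_{i+1}$, loop through $T_{i+1}\to\cdots\to T_{p}\to x\to T_{1}\to\cdots\to T_{i-1}$ as above, and re-enter $T_{i}\setminus\{y\}$ at a chosen vertex, terminating at $z$. The crux is then to produce a Hamiltonian path of $T_{i}\setminus\{y\}$ ending at $z$ (or, by symmetry, of $T_{i}\setminus\{z\}$ ending at $y$). When the sub-tournament in question is itself strong this follows at once from its Hamiltonian cycle; when it is not, one invokes Theorem~\ref{1} a second time inside the sub-tournament and runs a short secondary case analysis. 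This inner step, rather than the outer pigeonhole, is the one I would expect to require the most care in a complete write-up.
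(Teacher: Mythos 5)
The paper only cites this corollary from Thomassen \cite{four} without reproducing a proof, so I am comparing your plan against the argument that Theorem~\ref{1} naturally supports. Your outer framework is right (contradiction, discard (i) by strongness and (iv) by inspection, encode (ii)/(iii) as a relation $\succ$, pigeonhole to get $x\succ y$ and $x\succ z$), but the heart of your proof --- the explicit path construction --- is broken. In the case $i<j$ your concatenation traverses $T_i,T_{i+1},\dots,T_p$, then $x$, then $T_1,\dots,T_{j-1},T_j$; since $i<j\le p-1$, every component $T_\ell$ with $i\le\ell\le j$ appears in \emph{both} stretches, so vertices are visited twice and the result is not a Hamiltonian path. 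Reversing the direction (starting from $z$) instead omits $T_{i+1},\dots,T_{j-1}$ entirely. Because all arcs between components point from lower to higher index, a single wrap-around through $x$ can only start in $T_i$ and end in $T_{i-1}$ (or start in $T_j$ and end in $T_{j-1}$), so this direct construction simply cannot reach $z$ unless $|i-j|=1$, and your $i=j$ subcase inherits the same difficulty. The directed-$3$-cycle case for $\succ$ is also only asserted, not ruled out.

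Both defects are repaired by one observation you are missing: \emph{if $T$ is strong and $v$ lies in a middle strong component of $T-u$, then $T-v$ is strong.} (Every vertex of $T-v$ reaches the terminal component $T_p$ of $T-u$ by going up the component order; $T_p$ reaches $u$ since $T$ is strong; $u$ reaches the initial component $T_1$; and $T_1$ dominates everything else --- none of $T_1$, $T_p$ contains $v$.) With this lemma, $x\succ y$ and $x\succ z$ give that $T-y$ and $T-z$ are both strong, so conditions (ii) and (iii) of Theorem~\ref{1} fail for the pair $\{y,z\}$ and Theorem~\ref{1} hands you a Hamiltonian path connecting $y$ and $z$ with no construction at all; the same lemma shows $u\succ v$ forces $T-v$ strong, so the relation $\succ$ can have no directed $3$-cycle. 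This is essentially Thomassen's route, and I would recommend rewriting your middle section around it rather than trying to salvage the concatenation.
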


\begin{cor}[\cite{four}] A 2-strong tournament is $1^{*}$-weakly connected unless it is isomorphic to $T_{6}^{s}$ or $\overline{T}_{6}^{s}$.
\end{cor}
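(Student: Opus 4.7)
The plan is to deduce this corollary directly from Theorem 2.1 by showing that the 2-strong hypothesis rules out conditions (i)--(iii), leaving only condition (iv) as the genuine exception. Fix two distinct vertices $x,y$ of a 2-strong tournament $T$; I would like to exhibit either an $(x,y)$-Hamiltonian path or a $(y,x)$-Hamiltonian path. By Theorem 2.1, such a path exists unless one of the four obstructions holds, so the task reduces to checking that, under the 2-strong assumption, the only obstruction that can occur is obstruction (iv).

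First I would dispose of (i): since $T$ is 2-strong it is in particular strong, so $T$ has no initial/terminal component distinct from itself, and (i) is vacuous. Next, for (ii), I would use the standard observation that removing a single vertex from a $2$-strong digraph leaves a strong digraph; hence $T-x$ is strong and the hypothesis of (ii) fails. The same argument applied to $T-y$ rules out (iii). This leaves only obstruction (iv), which is precisely the stated exception ``$T\cong T_{6}^{s}$ or $T\cong\overline{T}_{6}^{s}$.''

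Thus, whenever $T$ is a 2-strong tournament not isomorphic to $T_{6}^{s}$ or $\overline{T}_{6}^{s}$, Theorem 2.1 produces a Hamiltonian path connecting $x$ and $y$ in at least one of the two directions; this path is, by definition, a weak $1^{*}$-container $C(x,y)$. Since $x,y$ were arbitrary, $T$ is $1^{*}$-weakly connected.

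The main (and essentially only) obstacle is the routine verification that 2-strong connectivity of $T$ implies the strong connectivity of both $T-x$ and $T-y$; this is immediate from Menger's theorem applied to the digraph $T$ (for any two vertices $u,v\neq x$, the two internally disjoint $(u,v)$-paths in $T$ cannot both pass through $x$, so one survives in $T-x$). No combinatorial analysis of $T_{6}^{s}$ or $\overline{T}_{6}^{s}$ is needed, because the exceptional case is handed to us verbatim by Theorem 2.1.
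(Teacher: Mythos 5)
Your derivation is correct and is exactly the intended route: the paper states this corollary without proof (citing Thomassen), and the standard argument is precisely your reduction via Theorem 2.1, using that $2$-strength kills obstructions (i)--(iii) so only the $T_{6}^{s}$, $\overline{T}_{6}^{s}$ exception survives. No issues.
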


 Thomassen also prove that every arc of a 3-strong tournament is contained in a Hamiltonian cycle i.e. for any two vertices $x$, $y$ of a 3-strong tournament, there is a Hamiltonian path between $x$ and $y$, and that every arc of a 4-strong tournament has a Hamiltonian bypass i.e. for any two vertices $x$, $y$ of a 4-strong tournament, there is a Hamiltonian path from $x$ to $y$ and from $y$ to $x$. According to these conclusions, the following is easy to obtain.

\begin{cor}[\cite{four}]\label{3} A 3-strong tournament is $1^{*}$-weakly connected.
\end{cor}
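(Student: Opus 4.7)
The plan is to derive the corollary directly from Thomassen's theorem cited in the paragraph immediately preceding the statement, namely that every arc of a 3-strong tournament is contained in a Hamiltonian cycle. Once that result is in hand, only a single short argument is needed to match the definition of $1^{*}$-weak connectivity: a weak $1^{*}$-container between $u$ and $v$ is just a Hamiltonian path from $u$ to $v$ or from $v$ to $u$.

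Concretely, I would argue as follows. Fix any two distinct vertices $x, y \in V(T)$. Since $T$ is a tournament, exactly one of $xy$ and $yx$ is an arc; without loss of generality assume $xy \in A(T)$. By Thomassen's theorem the arc $xy$ lies on a Hamiltonian cycle $C$ of $T$. Deleting the arc $xy$ from $C$ leaves a Hamiltonian $(y,x)$-path $P$, which visits every vertex of $T$. The single path $P$ therefore constitutes a weak $1^{*}$-container between $x$ and $y$, because the definition of a weak $k^{*}$-container allows the $k$ internally disjoint paths to be oriented as either $(u,v)$-paths or $(v,u)$-paths. Since $x,y$ were arbitrary, $T$ is $1^{*}$-weakly connected.

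There is essentially no obstacle in this proof: all the combinatorial difficulty is absorbed by the invoked Thomassen result. The only subtle point worth spelling out is that the weak version of $1^{*}$-connectivity permits the single spanning path to go in either direction between $x$ and $y$, so we do not need a Hamiltonian path in each direction (which would correspond instead to Thomassen's 4-strong ``Hamiltonian bypass'' result mentioned just above). This distinction is what makes the corollary follow from 3-strong connectivity rather than requiring 4-strong connectivity.
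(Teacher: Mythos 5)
Your proposal is correct and follows exactly the route the paper intends: it invokes Thomassen's result that every arc of a 3-strong tournament lies on a Hamiltonian cycle, deletes the arc $xy$ from that cycle to get a spanning $(y,x)$-path, and observes that a single spanning path in either direction is precisely a weak $1^{*}$-container. This matches the paper's own (one-line) derivation stated in the paragraph preceding the corollary.
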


\begin{cor}[\cite{four}]\label{4} A 4-strong tournament is $1^{*}$-strongly connected and $2^{*}$-strongly connected.
\end{cor}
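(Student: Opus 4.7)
The plan is to derive both assertions directly from the two Thomassen theorems quoted in the paragraph immediately preceding the corollary, together with the observation that in a tournament the arc between two specified vertices is itself a path that can serve as one member of a container.

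For the $1^{*}$-strongly connected claim, the key point is that a single Hamiltonian $(x,y)$-path (or $(y,x)$-path) is by itself a strong $1^{*}$-container. Since a 4-strong tournament is in particular 3-strong, the quoted theorem that every arc of a 3-strong tournament lies in a Hamiltonian cycle guarantees, for any two distinct vertices $x,y$, the existence of a Hamiltonian path from $x$ to $y$ (and, appealing to the stronger 4-strong statement, one from $y$ to $x$ as well). Either one realises a strong $1^{*}$-container between $x$ and $y$, so $T$ is $1^{*}$-strongly connected.

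For the $2^{*}$-strongly connected claim, I would fix two distinct vertices $x,y$ and use the fact that exactly one of $xy$, $yx$ is an arc of $T$; say $x\to y$ without loss of generality. Thomassen's Hamiltonian-bypass theorem for 4-strong tournaments then produces a Hamiltonian $(x,y)$-path $P_{1}$ of length $n-1$. Take $P_{2}$ to be the single arc $xy$, viewed as an $(x,y)$-path of length $1$. Then $P_{1}$ and $P_{2}$ are two $(x,y)$-paths; they are internally vertex-disjoint because $P_{2}$ has no internal vertices at all; and their union already covers $V(T)$ since $P_{1}$ alone does. Hence $\{P_{1},P_{2}\}$ is a strong $2^{*}$-container from $x$ to $y$, proving $2^{*}$-strong connectivity.

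The proof really has no hard step, which is exactly why the paper introduces it with the phrase ``easy to obtain''. The only mild subtlety to flag is the container bookkeeping: one has to notice that a single arc qualifies as a path in the definition of a $k$-container and hence can be paired with a Hamiltonian path of length $n-1$ to form a spanning container of cardinality two. Once that is observed, Thomassen's theorems do all the work.
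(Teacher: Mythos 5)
Your proof is correct and takes exactly the route the paper intends: the corollary is stated without proof, introduced only by the two quoted Thomassen results (every arc of a 3-strong tournament lies in a Hamiltonian cycle; every arc of a 4-strong tournament has a Hamiltonian bypass), and your pairing of the Hamiltonian bypass of the arc $xy$ with the arc $xy$ itself is precisely the ``easy to obtain'' deduction being alluded to. The only trivial slip is directional: for the arc $xy$ the 3-strong theorem literally yields a Hamiltonian $(y,x)$-path (the cycle minus the arc) while the bypass yields the $(x,y)$-path, the reverse of how you labelled them, but since either orientation realises a strong $1^{*}$-container this is immaterial.
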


The following is also easy to obtain.

\begin{pro}\label{5} A strong tournament is $2^{*}$-weakly connected.
\end{pro}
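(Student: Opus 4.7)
The proof will be a direct application of Camion's theorem (already recalled in the excerpt: \emph{``When $T$ is strong, $T$ has a Hamiltonian cycle''}). The plan is as follows.

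Let $T$ be a strong tournament and let $u,v$ be any two distinct vertices. First I would invoke Camion's theorem to fix a Hamiltonian cycle $C = x_1 x_2 \cdots x_n x_1$ of $T$. Relabelling, I may assume $u = x_1$ and $v = x_j$ for some $2 \leq j \leq n$. The cycle $C$ is then naturally partitioned at $u$ and $v$ into two arcs:
\begin{align*}
P_1 &:= x_1 x_2 \cdots x_j \quad \text{(a $(u,v)$-path)}, \\
P_2 &:= x_j x_{j+1} \cdots x_n x_1 \quad \text{(a $(v,u)$-path)}.
\end{align*}

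Next I would check the three properties required of a weak $2^{*}$-container. The two paths are internally disjoint because their only common vertices are the endpoints $u$ and $v$. Their union covers every vertex of $T$ because $C$ is Hamiltonian. Finally, both are paths between $u$ and $v$, though in opposite directions, which is permitted under the ``different directions allowed'' clause in the definition of a weak container. Hence $\{P_1,P_2\}$ is a weak $2^{*}$-container between $u$ and $v$.

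Since $u,v$ were arbitrary, $T$ admits a weak $2^{*}$-container between every pair of distinct vertices, so $T$ is $2^{*}$-weakly connected. There is essentially no obstacle here; the only substantive ingredient is Camion's theorem, and the decomposition of a Hamiltonian cycle at two chosen vertices automatically produces the one $(u,v)$-path together with the one $(v,u)$-path whose existence the definition demands.
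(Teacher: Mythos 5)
Your proof is correct and is exactly the paper's argument, just written out in more detail: the paper likewise takes a Hamiltonian cycle (guaranteed by strongness) and implicitly splits it at $u$ and $v$ into a $(u,v)$-path and a $(v,u)$-path forming the weak $2^{*}$-container. No differences of substance.
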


\begin{proof} Let $T$ be a strong tournament, then $T$ has a Hamiltonian cycle. So for any two vertices $x$ and $y$ of $T$, there is a weak $2^{*}$-container between $x$ and $y$. Therefore, $T$ is $2^{*}$-weakly connected.
\end{proof}

\begin{thm}\label{10} For all $k\geq 0$, a $(2k+1)$-strong tournament is $(k+2)^{*}$-weakly connected.
\end{thm}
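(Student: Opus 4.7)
The plan is to induct on $k$. The base case $k=0$ is exactly Proposition 5: a strong tournament is $2^*$-weakly connected because it contains a Hamiltonian cycle, which splits into two internally disjoint paths between any two vertices. For the inductive step, assume every $(2k-1)$-strong tournament is $(k+1)^*$-weakly connected and let $T$ be a $(2k+1)$-strong tournament with distinguished vertices $u,v$. The unifying idea is to exhibit a short path $P$ between $u$ and $v$ whose internal vertex set $S$ satisfies $|S|\le 2$, so that $T-S$ remains $(2k-1)$-strong; applying the inductive hypothesis to $T-S$ supplies $k+1$ internally disjoint paths between $u$ and $v$ covering $V(T)\setminus S$, and adjoining $P$ to these yields the desired weak $(k+2)^*$-container.

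First I would try to find a single $w\in V(T)\setminus\{u,v\}$ giving a length-$2$ path, either $u\to w\to v$ or $v\to w\to u$. If such a $w$ exists, then $T-w$ is $2k$-strong, hence $(2k-1)$-strong, and the inductive hypothesis furnishes a weak $(k+1)^*$-container in $T-w$; since $w$ is the only new internal vertex, adjoining the length-$2$ path through $w$ produces the required $k+2$ internally disjoint paths covering all of $V(T)$.

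Otherwise every $w\in V(T)\setminus\{u,v\}$ lies in one of $X=\{w:\ u\to w,\ v\to w\}$ or $Y=\{w:\ w\to u,\ w\to v\}$, so $V(T)\setminus\{u,v\}=X\cup Y$. WLOG $u\to v$; since a $(2k+1)$-strong tournament satisfies $\delta^+,\delta^-\ge 2k+1$, the out-neighbourhood of $u$ being $\{v\}\cup X$ forces $|X|\ge 2k$, and similarly $|Y|\ge 2k$. I would then seek an arc $x\to y$ with $x\in X$ and $y\in Y$. Given such an arc, $u\to x\to y\to v$ is a length-$3$ path, and $T-\{x,y\}$ is $(2k-1)$-strong; the inductive hypothesis supplies a weak $(k+1)^*$-container in $T-\{x,y\}$, and adjoining this length-$3$ path gives the desired $(k+2)^*$-container.

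The main obstacle is ruling out the degenerate configuration in which no arc from $X$ to $Y$ exists. The plan is to argue by contradiction: if all arcs between $X$ and $Y$ run from $Y$ to $X$, then every $x\in X$ is dominated by $u$, by $v$, and by all of $Y$, so $N^+(x)\subseteq X$; consequently no directed path starting in $X$ can ever leave $X$, and in particular cannot reach $v$. Since $u$'s out-neighbourhood is $\{v\}\cup X$, this forces the arc $uv$ to be the unique $u\to v$ path, giving $\kappa_T(u,v)=1$, which contradicts $(2k+1)$-strong connectivity for $k\ge 1$. Hence the arc $x\to y$ must exist and the induction goes through.
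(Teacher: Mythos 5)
Your argument is correct, but it is organized quite differently from the paper's. The paper proves the theorem in one shot: it partitions $V(T)\setminus\{u,v\}$ into the four sets $A,B,C,D$, extracts $k$ pairwise internally disjoint paths of length $2$ or $3$ simultaneously (three cases according to $|C\cup D|$), deletes their at most $2k$ internal vertices, and notes that the remainder is still strong, so its Hamiltonian cycle (Proposition 2.5) supplies the last two paths of the weak container. Your induction on $k$ peels off a single short path per step: either a length-$2$ path through one vertex, or, when $V(T)\setminus\{u,v\}=X\cup Y$, a length-$3$ path through a single arc from $X$ to $Y$ whose existence you get from a clean contradiction with strong connectivity ($X$ would be a nonempty set with no out-arcs, so $u$--$v$ local connectivity would drop to $1$). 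This buys a genuine improvement in rigor: the paper's Cases 1 and 2 pass from ``there are at least $2k+1$ arcs from $A$ to $B$'' to ``there are $2k+1$ internally disjoint length-$3$ paths,'' which tacitly requires the arcs to be pairwise independent (a matching), something that needs a Menger- or K\"onig-type justification; your version only ever needs one such arc, so the issue disappears. What the paper's approach buys in exchange is brevity and a single terminal application of Proposition 2.5 rather than a recursion bottoming out in it. All the bookkeeping in your write-up checks out: the deleted internal vertices keep the new path internally disjoint from the container obtained in $T-S$, the degree bound $\delta^+,\delta^-\geq 2k+1$ is the standard fact for $(2k+1)$-strong tournaments, and the inductive hypothesis is applied to a $(2k-1)$-strong tournament exactly as required.
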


\begin{proof} Let $T$ be a $(2k+1)$-strong tournament and $x$, $y$ be any two vertices of $T$. The vertices of $T-\{x, y\}$ can be partitioned into four subsets $A$, $B$, $C$, $D$ such that both $x$ and $y$ dominate each vertex of $A$ and are dominated by each vertex of $B$, each vertex of $C$ dominates $y$ and is dominated by $x$, and each vertex of $D$ dominates $x$ and is dominated by $y$. Now, we consider the proposition for $k\geq 0$ as follows.

Case 1. $C\cup D=\emptyset$. $T$ has no path of length 2 between $x$ and $y$. Since $T$ is $(2k+1)$-strong, there are at least $2k+1$ arcs from $A$ to $B$. Thus, $T$ has at least $2k+1$ internally disjoint paths of length 3 between $x$ and $y$. We choose $k$ internally disjoint paths of length 3 among them and delate those $2k$ intermediate vertices of paths of length 3, the resulting tournament is strong. By Proposition \ref{5}, there is a weak $2^{*}$-container between $x$ and $y$. Combining the $k$ internally disjoint paths of length 3 between $x$ and $y$, then $T$ has a weak $(k+2)^{*}$-container between $x$ and $y$.

Case 2. $1\leq |C\cup D|\leq k-1$. We assume $|C\cup D|=i$. Since $T$ is $(2k+1)$-strong, as an argument similar to that used above in the case 2 of Proposition \ref{11}, there are at least $k-i$ arcs from $A$ to $B$. Then $T$ has at least $k-i$ paths of length 3 between $x$ and $y$. Delate those $2(k-i)$ intermediate vertices of paths of length 3 and those $i$ vertices of $C\cup D$, then $T$ is $(i+1)$-strong. So the resulting tournament is at least strong and has a weak $2^{*}$-container between $x$ and $y$. Combining the $k-i$ internally disjoint paths of length 3 and the $i$ internally disjoint paths of length 2 between $x$ and $y$, then $T$ has a weak $(k+2)^{*}$-container between $x$ and $y$.

Case 3. $|C\cup D|\geq k$. $T$ has at least $k$ paths of length 2 between $x$ and $y$. Delate the $k$ intermediate vertices of paths of length 2, the resulting tournament is strong, by Proposition \ref{5}, there is a weak $2^{*}$-container between $x$ and $y$. Combining the $k$ internally disjoint paths of length 2 between $x$ and $y$, then $T$ has a weak $(k+2)^{*}$-container between $x$ and $y$.

Hence, $T$ is $(k+2)^{*}$-weakly connected. The proof is completed.
\end{proof}

By an argument similar to that of Theorem~\ref{10}, we obtain the following proposition in which the connectivity can be weaken slightly.

\begin{pro}\label{11} If a $2k$-strong tournament $T$ has at least one path of length 2 between any two vertices $x$ and $y$, then $T$ is $(k+2)^{*}$-weakly connected.
\end{pro}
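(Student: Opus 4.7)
The plan is to follow the three-case structure used in the proof of Theorem~\ref{10}, partitioning $V(T)\setminus\{x,y\}$ into the same four classes $A,B,C,D$ (vertices dominated by both $x$ and $y$; vertices dominating both; intermediate vertices of $2$-paths $x\to c\to y$; intermediate vertices of $2$-paths $y\to d\to x$). The extra hypothesis that $T$ has a path of length $2$ between any two vertices forces $C\cup D\neq\emptyset$ for every pair $x,y$, so Case~1 of Theorem~\ref{10} (where $C\cup D=\emptyset$ and all internally disjoint $(x,y)$-paths must come from $A$-to-$B$ arcs) never arises. Since it is precisely this case that consumed the extra unit of connectivity, the hypothesis can be weakened from $(2k+1)$-strong to $2k$-strong.

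For Case~2, where $1\le|C\cup D|=i\le k-1$, I would build the weak $(k+2)^{*}$-container from three pieces: $i$ paths of length $2$ through $C\cup D$; $k-i$ internally disjoint paths of length $3$ of the form $x\to a\to b\to y$ with $a\in A$, $b\in B$; and a Hamiltonian cycle of the residual sub-tournament contributing the final two paths. The length-$2$ pieces are automatic. The core step is extracting $k-i$ such length-$3$ paths, i.e., producing a matching of size $k-i$ in the $A$-to-$B$ bipartite digraph. I would do this via Menger's theorem inside $T-(C\cup D)$, which is still at least $(2k-i)$-strong: this yields $2k-i$ internally disjoint $(x,y)$-paths, and once $C\cup D$ is gone every such path of length $\ge 2$ starts with $x\to a$ for some $a\in A$ and ends with $b\to y$ for some $b\in B$, so it contains an $A$-to-$B$ arc; internal-disjointness across paths then gives a matching of size at least $2k-i-1\ge k-i$ (the $-1$ covers a possible direct arc $xy$). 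After removing the $i+2(k-i)=2k-i$ intermediate vertices used so far, the residual tournament is still at least $i$-strong, hence strong, and Proposition~\ref{5} supplies the closing Hamiltonian cycle between $x$ and $y$.

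For Case~3, where $|C\cup D|\ge k$, I would simply pick any $k$ vertices of $C\cup D$ to obtain $k$ length-$2$ paths, delete them, observe that the residual tournament is still $k$-strong and so strong, and invoke Proposition~\ref{5} once more for the remaining two paths.

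I expect the only real obstacle to be the matching step in Case~2, because it is the place where one has to convert raw Menger-type connectivity into a concrete structural conclusion about $A$-to-$B$ arcs; the remaining bookkeeping, checking that after the chosen intermediate vertices are deleted the tournament is still strong enough for Proposition~\ref{5}, is a routine connectivity count.
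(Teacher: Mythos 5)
Your proposal is correct and follows essentially the same route as the paper: the same partition of $V(T)\setminus\{x,y\}$ into $A,B,C,D$, the same three-regime case split on $|C\cup D|$, and the same assembly of length-$2$ paths, length-$3$ paths through an $A$-to-$B$ matching, and a closing Hamiltonian cycle of the residual strong tournament via Proposition~\ref{5}. Your Menger-based extraction of $2k-i-1$ vertex-disjoint $A$-to-$B$ arcs is in fact a cleaner justification of the one step the paper treats loosely (it counts ``arcs from $A$ to $B$'' via subcases on $|C|$ versus $|D|$ without explicitly arguing that the arcs can be chosen vertex-disjoint), so no gap.
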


\begin{proof} Let $x$, $y$ be any two vertices of $T$. The vertices of $T-\{x, y\}$ can be partitioned into four subsets $A$, $B$, $C$, $D$ such that both $x$ and $y$ dominate each vertex of $A$ and are dominated by each vertex of $B$, each vertex of $C$ dominates $y$ and is dominated by $x$, and each vertex of $D$ dominates $x$ and is dominated by $y$. We assume that there are $i$ paths of length 2 between $x$ and $y$ i.e. $|C\cup D|=i$ and $i\geq 1$.

Case 1. $|C\cup D|=1$. Since $T$ is $2k$-strong,  $T$ has at least $2k-1$ paths of length 3 between $x$ and $y$. We choose $k-1$ paths of length 3 between $x$ and $y$, delate those $2(k-1)$ intermediate vertices of paths of length 3 and the vertex in $C$ or $D$. The resulting tournament is strong, by Proposition \ref{5}, there is a weak $2^{*}$-container between $x$ and $y$. Combining the $k-1$ internally disjoint paths of length 3 and the path of length 2 between $x$ and $y$, then $T$ has a weak $(k+2)^{*}$-container between $x$ and $y$.

Case 2. $2\leq |C\cup D|\leq k-1$. If $|C|=|D|=\frac{i}{2}$, since $T$ is $2k$-strong, there are at least $2k-\frac{i}{2}$ arcs from $A$ to $B$. Then $T$ has at least $2k-\frac{i}{2}$ paths of length 3 between $x$ and $y$. So we choose $k-i$ paths of length 3 between $x$ and $y$. Delate those $2(k-i)$ intermediate vertices of paths of length 3 and those $i$ intermediate vertices of paths of length 2. The resulting tournament is strong, by Proposition \ref{5}, there is a weak $2^{*}$-container between $x$ and $y$. Combining the $k-i$ internally disjoint paths of length 3 and the $i$ internally disjoint paths of length 2 between $x$ and $y$, then $T$ has a weak $(k+2)^{*}$-container between $x$ and $y$. If $|C|<|D|$ or $|D|<|C|$, we assume $min\{|C|,|D|\}=m$ and $m<\frac{i}{2}$. Since $T$ is $2k$-strong, there are at least $2k-m-1$ arcs from $A$ to $B$. Then $T$ has at least $2k-m-1$ paths of length 3 between $x$ and $y$. So we choose $k-i$ paths of length 3 between $x$ and $y$. By an argument similar to that used above in this case, one can see that $T$ has a weak $(k+2)^{*}$-container between $x$ and $y$.

Case 3. $|C\cup D|\geq k$. $T$ has at least $k$ paths of length 2 between $x$ and $y$. Delate the $k$ intermediate vertices of paths of length 2, the resulting tournament is strong, by Proposition \ref{5}, there is a weak $2^{*}$-container between $x$ and $y$. Combining the $k$ internally disjoint paths of length 2 between $x$ and $y$, then $T$ has a weak $(k+2)^{*}$-container between $x$ and $y$.

Hence, $T$ is $(k+2)^{*}$-weakly connected. The proof is completed.
\end{proof}

\section{$k^{*}$-strongly connected tournaments}

\begin{thm}
For all $k\geq 2$, a $2k$-strong tournament is $k^{*}$-strongly connected.
\end{thm}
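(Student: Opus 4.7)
The plan is to show that for every $w$ with $1\le w\le k$, between any two distinct vertices $x$ and $y$ of $T$ there is a strong $w^{*}$-container. Since $k\ge 2$ makes $T$ at least $4$-strong, Corollary~\ref{4} disposes of the cases $w\in\{1,2\}$ at once, so the real work lies in the range $3\le w\le k$. I will follow the recipe used in the proofs of Theorem~\ref{10} and Proposition~\ref{11}: construct $w-2$ short same-direction paths between $x$ and $y$, delete their internal vertices, and close things up with a strong $2^{*}$-container supplied by Corollary~\ref{4} on what is left.

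Fix $(x,y)$ and partition $V(T)\setminus\{x,y\}$ into the four familiar sets $A$, $B$, $C$, $D$. Since the statement is symmetric under reversing all arcs of $T$ (which interchanges $C$ and $D$), we may assume $|C|\ge|D|$ and aim to build the container in the $(x,y)$-direction. If $|C|\ge w-2$, pick any $w-2$ vertices of $C$ to obtain $w-2$ internally disjoint $(x,y)$-paths of length $2$; deleting their internal vertices leaves a tournament that is $(2k-w+2)$-strong, hence at least $4$-strong because $w\le k$, and Corollary~\ref{4} supplies two further internally disjoint $(x,y)$-paths covering the residual. Merging the two families produces the desired strong $w^{*}$-container.

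The substantive case is $|C|<w-2$. Here the idea is to use all $|C|$ length-$2$ paths through $C$ together with $w-2-|C|$ length-$3$ paths $x\to a\to b\to y$ coming from a matching of $A\to B$ arcs; the internal-vertex cost is $|C|+2(w-2-|C|)=2w-4-|C|$, and the residual is $(2k-2w+4+|C|)$-strong, still at least $4$-strong since $w\le k$, so Corollary~\ref{4} finishes the construction as before. The main obstacle is producing a matching of $A\to B$ arcs of size at least $w-2-|C|$. If the maximum matching $M$ of such arcs were smaller, K\"onig's theorem would give a vertex cover $S_{A}\cup S_{B}\subseteq A\cup B$ of all $A\to B$ arcs with $|S_{A}|+|S_{B}|=M$; the set $Z=S_{A}\cup S_{B}\cup C\cup D$ would then separate $x$ from $y$ in $T$ apart from the possible arc $xy$, because after removing $Z$ every step out of $x$ lands in $A\setminus S_{A}$, from which the cover property forbids any arc to $B\setminus S_{B}$, while $y$ dominates $A$ and $D$ has already been deleted. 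Menger's theorem in the $2k$-strong tournament forces $|Z|\ge 2k-1$, i.e. $M\ge 2k-1-|C|-|D|$; combined with $|D|\le|C|<w-2\le k-2$ and $w\le k$ this yields $M\ge w-2-|C|$, contradicting the assumption and completing the argument.
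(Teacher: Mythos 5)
Your overall architecture matches the paper's: build $w-2$ internally disjoint short paths between $x$ and $y$ in one direction, delete their internal vertices, and invoke Corollary~\ref{4} on the still-$4$-strong residual to supply the last two paths. Your K\"onig--Menger argument for extracting a large matching of $A\to B$ arcs is in fact more careful than the paper's bare count of arcs from $A$ to $B$, and the arithmetic in both of your cases checks out.

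The gap is in the final step. A strong $2^{*}$-container ``between $x$ and $y$'' in the sense of this paper is either two $(x,y)$-paths or two $(y,x)$-paths, and Corollary~\ref{4} does not let you choose which: the container it actually delivers comes from Thomassen's Hamiltonian-bypass theorem, namely the arc joining $x$ and $y$ together with a spanning path in the same direction, so it runs from the tail of that arc to its head. You normalize by reversing all arcs so that $|C|\ge|D|$ and then build everything in the $(x,y)$-direction, but this normalization says nothing about the orientation of the arc between $x$ and $y$. If $y$ dominates $x$ while $|C|\ge|D|$ (which certainly occurs), the residual $4$-strong tournament is only guaranteed a strong $2^{*}$-container from $y$ to $x$, and two $(y,x)$-paths cannot be merged with your $w-2$ $(x,y)$-paths into a strong container. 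The paper avoids this by normalizing so that $x$ dominates $y$, which is exactly what licenses its phrase ``there is a strong $2^{*}$-container from $x$ to $y$.'' Note that you cannot simply switch to that normalization and keep the rest of your argument: your bound $M\ge 2k-1-|C|-|D|\ge w-2-|C|$ relies on $|D|\le|C|<w-2$, and once you normalize by the arc direction instead, $|D|$ is unbounded and the inequality can fail (the configuration $x\to y$ with $C$ small, $D$ large, and all connectivity from $A$ to $B$ routed through $D$ defeats the length-$3$ construction entirely). So the two normalizations are genuinely in tension, and resolving that tension requires an additional idea that is missing from your write-up.
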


\begin{proof} Let $T$ be a $2k$-strong tournament and $x$, $y$ be any two vertices of $T$. Without loss of generality, we assume $xy\in A(T)$. The vertices of $T-\{x, y\}$ can be partitioned into four subsets $A$, $B$, $C$, $D$ such that both $x$ and $y$ dominate each vertex of $A$ and are dominated by each vertex of $B$, each vertex of $C$ dominates $y$ and is dominated by $x$, and each vertex of $D$ dominates $x$ and is dominated by $y$. Now, we consider three cases as follows.

Case 1. $C=\emptyset$. Since $T$ is $2k$-strong, there are at least $2k-1$ arcs from $A$ to $B$. Thus, $T$ has $2k-1$ internally disjoint $(x,y)$-paths of length 3. We choose $k-2$ internally disjoint $(x,y)$-paths among them and delate the $2(k-2)$ intermediate vertices of the $(x,y)$-paths. The resulting tournament is 4-strong, by Corollary \ref{4}, there is a strong $2^{*}$-container from $x$ to $y$. Combining the $k-2$ internally disjoint $(x,y)$-paths of length 3, $T$ has a strong $k^{*}$-container from $x$ to $y$.

Case 2. $1\leq |C|\leq k-3$. Since $T$ is $2k$-strong, there are at least $2k-1-|C|$ arcs from $A$ to $B$. Thus, $T$ has $2k-1-|C|$ internally disjoint $(x,y)$-paths of length 3. We choose $k-2-|C|$ internally disjoint $(x,y)$-paths among them, then delate the $2(k-2-|C|)$ intermediate vertices of $(x,y)$-paths of length 3 and the vertices of $C$. The resulting tournament is $(4+|C|)$-strong, by Corollary \ref{4}, there is a strong $2^{*}$-container from $x$ to $y$. Combining the $k-2-|C|$ internally disjoint $(x,y)$-paths of length 3 and the $|C|$ internally disjoint $(x,y)$-paths of length 2, $T$ has a strong $k^{*}$-container from $x$ to $y$.

Case 3. $|C|\geq k-2$. $T$ has at least $k-2$ internally disjoint $(x,y)$-paths of length 2. Delate the $k-2$ intermediate vertices of $(x,y)$-paths of length 2, the resulting tournament is $(k+2)$-strong and it is at least 4-strong, by Corollary \ref{4}, there is a strong $2^{*}$-container from $x$ to $y$. Combining the $k-2$ internally disjoint $(x,y)$-paths of length 2, $T$ has a strong $k^{*}$-container from $x$ to $y$.

Hence, $T$ is $k^{*}$-strongly connected. The proof is completed.
\end{proof}

By an argument similar to that of the theorem above, we obtain the following theorem in which the connectivity can be weaken slightly and we omit the detailed proof.
\begin{thm} For all $k\geq 2$, let $T$ be a $2k+1$-strong tournament. If $T$ has at least one 2-bypass for each arc of $T$, then $T$ is $(k+1)^{*}$-strongly connected.
\end{thm}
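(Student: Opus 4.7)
The plan is to mirror the proof of the preceding theorem, with the $2$-bypass hypothesis taking the place of the missing unit of strong connectivity. Assume without loss of generality that $xy\in A(T)$, and partition $V(T)\setminus\{x,y\}$ into $A$, $B$, $C$, $D$ according to the adjacency pattern with $\{x,y\}$. The $2$-bypass hypothesis applied to the arc $xy$ produces a vertex $c$ with $xc,cy\in A(T)$, so $c\in C$ and hence $|C|\geq 1$; this guaranteed length-$2$ $(x,y)$-path is precisely what compensates for having connectivity only $2k+1$ rather than $2k+2$, which would allow direct imitation of the previous argument.

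I would then split into two cases by $|C|$. In each case the strategy is the same: extract a prescribed family of internally disjoint short $(x,y)$-paths (length-$2$ paths through $C$ together with length-$3$ paths $x\to a\to b\to y$ coming from arcs $ab$ with $a\in A$, $b\in B$), delete their internal vertices so that the residual tournament is still $4$-strong by the connectivity bookkeeping, and invoke Corollary~\ref{4} to extract a strong $2^*$-container spanning the residue. Combining the short paths with this $2^*$-container yields the required strong $(k+1)^*$-container from $x$ to $y$.

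Explicitly, in Case~1 ($1\leq|C|\leq k-1$) the $(2k+1)$-strong hypothesis gives at least $2k-|C|$ arcs from $A$ to $B$ by the same cut-and-Menger argument as in the preceding theorem; pick $k-1-|C|$ of them as length-$3$ paths and take all $|C|$ length-$2$ paths through $C$, removing $2(k-1-|C|)+|C|=2k-2-|C|$ internal vertices and leaving connectivity $3+|C|\geq 4$, for a total of $|C|+(k-1-|C|)+2=k+1$ internally disjoint $(x,y)$-paths. In Case~2 ($|C|\geq k-1$) take $k-1$ length-$2$ paths through $C$, removing $k-1$ internal vertices and leaving connectivity $(2k+1)-(k-1)=k+2\geq 4$ (using $k\geq 2$), for a total of $(k-1)+2=k+1$ paths. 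The hypothesis $k\geq 2$ is precisely what is needed to keep every count non-negative and the residual tournament $4$-strong.

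The main obstacle I anticipate is justifying the arc-count estimate $e(A,B)\geq 2k-|C|$ used in Case~1: this is the directed counterpart of the Menger-type cut argument underlying the preceding theorem, and one must take care to ensure that the selected length-$3$ paths can be chosen with pairwise disjoint intermediate pairs $(a,b)$, so that they are internally disjoint from one another and from the length-$2$ paths through $C$. Once this combinatorial matching is established, the deletion-and-recombination goes through exactly as in the preceding theorem, and Corollary~\ref{4} closes the proof.
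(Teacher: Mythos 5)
Your proposal is correct and is essentially the proof the paper has in mind: the paper omits the argument, stating only that it is "similar to the theorem above," and your reconstruction is exactly that analogue, with the $2$-bypass hypothesis supplying the guaranteed length-$2$ path through $C$ that makes the deletion count $2k-2-|C|\leq 2k-3$ and keeps the residual tournament $4$-strong. The case split and arithmetic all check out.
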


\section{Spanning connectivity of tournaments}

\begin{lem}[\cite{four}]\label{9} Let $T$ be a tournament with $n$ vertices and the irregularity $i(T)\leq k$. Then the connectivity of $T$ is at least $\frac{1}{3}(n-2k)$.
\end{lem}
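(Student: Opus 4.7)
My plan is to combine the irregularity hypothesis, which forces every vertex to have both in-degree and out-degree at least $(n-1-k)/2$, with a double-counting argument applied to the initial and terminal strong components that a minimum separator leaves behind.

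Concretely, I would fix a minimum vertex cut $S$, let $s=|S|=\kappa(T)$, and decompose the non-strong tournament $T-S$ into its strong components in the usual linear order, writing $T_1$ for the initial component (of size $a$) and $T_m$ for the terminal one (of size $b$). Because every vertex of $V(T_1)$ dominates all of $V(T-S)\setminus V(T_1)$, the in-neighbours of any $v\in V(T_1)$ must lie in $(V(T_1)\setminus\{v\})\cup S$. Summing the lower bound $d^{-}(v)\geq (n-1-k)/2$ over $V(T_1)$ and comparing it with the upper bound $\binom{a}{2}+as$ from that neighbourhood restriction yields, after dividing by $a$, an inequality of the form $a+2s\geq n-k$.

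An identical double count on $T_m$ using out-degrees (every vertex of $V(T_m)$ is dominated by all of $V(T-S)\setminus V(T_m)$) gives $b+2s\geq n-k$. Adding the two inequalities and bounding $a+b$ from above by $n-s$, which holds because any middle strong components of $T-S$ contribute non-negatively to $n$, produces $3s\geq n-2k$, i.e., $\kappa(T)\geq (n-2k)/3$.

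The main thing to watch is that the reduction to only $T_1$ and $T_m$ does not lose information: intermediate strong components never enter the double count but are absorbed harmlessly into the slack $a+b\leq n-s$. Apart from this bookkeeping, the argument is a short calculation and I do not anticipate any serious obstacle.
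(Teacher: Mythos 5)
Your argument is correct: the two double counts give $a+2s\geq n-k$ and $b+2s\geq n-k$, and together with $a+b+s\leq n$ (valid since $T-S$ has at least two strong components, so $V(T_1)$ and $V(T_m)$ are disjoint) they yield $3s\geq n-2k$. The paper cites this lemma from Thomassen without proof, and your separator-plus-initial/terminal-component argument is essentially the standard (original) one, so there is nothing to add.
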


\begin{thm} Let $T$ be a tournament with $n$ vertices and $i(T)\leq k$. If $n\geq6t+5k$ $(t\geq 2)$ , then $\kappa_{s}^{*}(T)\geq t$.
\end{thm}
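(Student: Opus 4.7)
The plan is to reduce the statement to an application of Lemma \ref{9} together with the previously established theorem that a $2k$-strong tournament is $k^{*}$-strongly connected (for $k\ge 2$), combined with Corollary \ref{4} to handle the small cases $\omega=1,2$.

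First I would apply Lemma \ref{9}: since $i(T)\le k$, we have $\kappa(T)\ge \tfrac{1}{3}(n-2k)$. Plugging in the hypothesis $n\ge 6t+5k$ gives
\[
\kappa(T)\ge \tfrac{1}{3}(6t+5k-2k)=2t+k\ge 2t.
\]
So $T$ is $2t$-strong, and in particular $2\omega$-strong for every $1\le \omega\le t$.

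Next, to establish $\kappa_{s}^{*}(T)\ge t$, I must verify that $T$ is $\omega^{*}$-strongly connected for every $1\le \omega\le t$. For $2\le \omega\le t$, since $T$ is $2\omega$-strong and $\omega\ge 2$, the earlier theorem ``a $2k$-strong tournament is $k^{*}$-strongly connected for $k\ge 2$'' applies directly and yields the $\omega^{*}$-strong container between any two vertices. For the remaining case $\omega=1$, I observe that $T$ is $2t$-strong with $t\ge 2$, hence at least $4$-strong, so Corollary \ref{4} gives that $T$ is $1^{*}$-strongly connected. Combining these two ranges covers all $1\le \omega\le t$ and therefore $\kappa_{s}^{*}(T)\ge t$.

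There is no genuine obstacle here: the proof is essentially a bookkeeping assembly of the degree-to-connectivity inequality from Lemma \ref{9}, the main theorem of the previous section, and the base case from Corollary \ref{4}. The only point that has to be checked carefully is that the constant $6t+5k$ in the hypothesis is exactly what is needed to push $\kappa(T)$ up to at least $2t$ once the $-2k$ correction inside Lemma \ref{9} is absorbed, which the calculation above confirms.
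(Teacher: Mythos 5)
Your proof is correct, but it takes a genuinely different route from the paper's. You reduce the statement to three previously established facts: Lemma \ref{9} gives $\kappa(T)\ge\frac{1}{3}(n-2k)\ge\frac{1}{3}(6t+3k)=2t+k\ge 2t$; the Section 3 theorem (``a $2\omega$-strong tournament is $\omega^{*}$-strongly connected for $\omega\ge 2$'') then handles every $2\le\omega\le t$ since $2t$-strong implies $2\omega$-strong; and Corollary \ref{4} covers $\omega=1$ because $T$ is at least $4$-strong. The paper instead gives a self-contained argument: after invoking Lemma \ref{9} only to get $2t$-strongness, it redoes a case analysis on $|A\cap B|$ (the common out-/in-neighbourhood of $x$ and $y$), using the irregularity bound to control in-degrees inside $T[A]$ and thereby manufacture the required internally disjoint $(x,y)$-paths of lengths $2$ and $3$ before appealing to Corollary \ref{4} for the last two paths. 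The two arguments are structurally parallel (short paths plus a spanning $2^{*}$-container on the remainder), but yours is far more economical and, notably, only uses the hypothesis through the inequality $\frac{1}{3}(n-2k)\ge 2t$, i.e.\ $n\ge 6t+2k$ would already suffice; the constant $6t+5k$ in the statement appears to be an artifact of the paper's direct degree computations (e.g.\ the bound $\frac{1}{2}(n-1-k)-d^{+}_{A}(u_{i})-d^{+}_{C}(u_{i})\ge t+\frac{5}{4}$). So your approach buys both brevity and a sharper threshold, at the cost of leaning entirely on the correctness of the earlier theorem rather than giving an independent verification.
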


\begin{proof} By Lemma \ref{9}, $T$ is $2t$-strong. As $t\geq2$, then $T$ is 4-strong. By Corollary \ref{4}, $T$ is $i^{*}$-strongly connected for $i\in\{1,2\}$. Let $x$, $y$ be any two vertices of $T$. Without loss of generality, we assume $x$ dominates $y$. Let $A$ (resp. $B$) be the set of vertices in $T-\{x,y\}$ dominated by $x$ (resp. dominating $y$). Now we consider the following three cases.

Case 1. $A\cap B=\emptyset$.

We shall show that there are at least $t-2$ vertices of $A$ dominate at least $t-2$ vertices of $B$. Since $|A|\geq\frac{1}{2}(n-3-k)$, $|B|\geq\frac{1}{2}(n-3-k)$ , we have $|C|=|V(T)\backslash(A\cup B\cup\{x,y\})|\leq n-2-\frac{1}{2}(n-3-k)-|A|$. Without loss of generality, we may suppose each vertex $u\not\in\{u_{1}, u_{2},...,u_{t-2}\}$ satisfying $d^{-}(u)\leq d^{-}(u_{i})$ for $i\in\{1,2,...,t-2\}$ in the sub-tournament $T[A]$. Recall that every tournament $T^{'}$ has a vertex of in-degree at least $\frac{1}{2}(|T^{'}|-1)$. This means that the in-degree of $u_{i}$ in $T[A]$ satisfies $d^{-}_{A}(u_{_i})\geq\frac{1}{2}(|A|-t+2)$ for $i\in\{1,2,...,t-2\}$. Otherwise, there would be a vertex in $A\backslash\{u_{1},u_{2},...,u_{t-2}\}+u_{i}$ such that the in-degree of it larger than $u_{i}$, contradicting the choice of $u_{i}$. So the out-degree of $u_{i}$ in $T[A]$ satisfies $d^{+}_{A}(u_{i})=|A|-1-d^{-}_{A}(u_{i})\leq \frac{1}{2}|A|+\frac{1}{2}(t-4)$. Therefore, the number of vertices in $B$ dominated by $u_{i}$ is at least $\frac{1}{2}(n-1-k)-d^{+}_{A}(u_{i})-d^{+}_{C}(u_{i})\geq t+\frac{5}{4}$, where $d^{+}_{C}(u_{i})$ denotes the number of vertices in $C$ dominated by $u_{i}$. It can be seen that every vertex $u_{i}$ dominates at least $t-2$ vertices of $B$. Thus, we obtain $t-2$ internally disjoint paths $xu_{i}v_{j}y$ of length 3, where $v_{j}\in B$ and $j\in\{1,2,...,t-2\}$. Let $\{P_{1}, P_{2},...,P_{m}\}$ $(1\leq m\leq t-2)$ denote the $m$ internally disjoint $(x,y)$-path of length 3. Deleting the $2m$ vertices from $T$, the resulting tournament is  4-strong. By Corollary \ref{4}, the resulting tournament is $2^{*}$-strongly connected. As $x$ dominates $y$, there is a strong $2^{*}$-container from $x$ to $y$. Let $\{P_{m+1}, P_{m+2}\}$ denote the two internally disjoint $(x,y)$-paths. Then $\{P_{1},P_{2},...,P_{m+2}\}$ forms a strong $(m+2)^{*}$-container from $x$ to $y$. Hence, $T$ has a strong $i^{*}$-container from $x$ to $y$ for all $i\in\{3,4,...,t\}$.

Case 2. $|A\cap B|\geq t-2$.

We choose $m$ $(1\leq m\leq t-2)$ vertices from $A\cap B$ and the $m$ vertices imply a set of $m$ internally disjoint $(x,y)$-paths of length 2, denoted by $\{P_{1},P_{2},...,P_{m}\}$. Deleting the $m$ vertices, the resulting tournament is $(2t-m)$-strong. Clearly, the resulting tournament is 4-strong. By Corollary \ref{4}, it is $2^{*}$-strongly connected. As $x$ dominates $y$, there is a strong $2^{*}$-container from $x$ to $y$. Let $\{P_{m+1}, P_{m+2}\}$ denote the two internally disjoint $(x,y)$-paths. Thus, $\{P_{1},P_{2},...,P_{m+2}\}$ forms a strong $(m+2)^{*}$-container from $x$ to $y$. Hence, $T$ has a strong $i^{*}$-container from $x$ to $y$ for all $i\in\{3,4,...,t\}$.

Case 3. $1\leq |A\cap B|\leq t-3$.

Suppose $A\cap B=S$ and $|S|=s$. Since $|A|\geq\frac{1}{2}(n-3-k)$, $|B|\geq\frac{1}{2}(n-3-k)$, we have $|C|=|V(T)\backslash(A\cup B\cup\{x,y\})|\leq n-2-\frac{1}{2}(n-3-k)-|A|+s$. We assume each vertex $u\not\in\{u_{1}, u_{2},...,u_{t-2-s}\}$ satisfying $d^{-}(u)\leq d^{-}(u_{i})$ for all $i\in \{1,2,...,t-2-s\}$ in the sub-tournament $T[A\backslash B]$. The in-degree of $u_{i}$ in $T[A\backslash B]$ satisfies $d^{-}_{A\backslash B}(u_{_i})\geq\frac{1}{2}(|A\backslash B|-t+2+s)$ for each $i\in\{1,2,...,t-2-s\}$. Otherwise, there would be a vertex in $A\backslash (B\cup \{u_{1},u_{2},...,u_{t-2-s}\})+u_{i}$ such that the in-degree of it larger than $u_{i}$, contradicting the choice of $u_{i}$. So the out-degree of $u_{i}$ in $T[A\backslash B]$ satisfies $d^{+}_{A\backslash B}(u_{i})=|A\backslash B|-1-d^{-}_{A\backslash B}(u_{i})\leq \frac{1}{2}|A\backslash B|+\frac{1}{2}(t-4-s)$. Therefore, the number of vertices in $B$ dominated by $u_{i}$ is at least $\frac{1}{2}(n-1-k)-d^{+}_{A\backslash B}(u_{i})-d^{+}_{C}(u_{i})\geq t+\frac{5}{4}$, where $d^{+}_{C}(u_{i})$ denote the number of vertices in $C$ dominated by $u_{i}$. It can be seen that every vertex $u_{i}$ dominates at least $(t-2-s)$ vertices of $B\backslash A$.
Thus, we obtain $(t-2-s)$ internally disjoint paths $xu_{i}v_{j}y$ of length 3 $(v_{j}\in B, j\in\{1,2,...,t-2-s\})$ and $s$ internally disjoint $(x,y)$-paths of length 2. Let $\{P_{1}, P_{2},...,P_{s}\}$ $(1\leq s\leq t-3)$ denotes the $s$ internally disjoint $(x,y)$-paths of length 2 and $\{Q_{1},Q_{2},...,Q_{m}\}$ $(1\leq m\leq t-2-s)$ denotes the $m$ internally disjoint $(x,y)$-paths of length 3. Deleting the $2m+s$ vertices from $T$, the resulting tournament is $(4+s)$-strong. By Corollary \ref{4}, the resulting tournament is $2^{*}$-strongly connected. As $x$ dominates $y$, there is a strong $2^{*}$-container from $x$ to $y$. Let $\{R_{1}, R_{2}\}$ denote the two internally disjoint $(x,y)$-paths. Then $\{P_{1}, P_{2},..., P_{s}, Q_{1}, Q_{2},..., Q_{m}, R_{1}, R_{2}\}$ forms a strong $(s+m+2)^{*}$-container from $x$ to $y$. Hence, $T$ has a strong $i^{*}$-container from $x$ to $y$ for all $i\in\{3,4,...,t\}$.

Thus, $T$ is $i^{*}$-strongly connected for $i\in\{3,4,...,t\}$. As $T$ is $i^{*}$-strongly connected for $i\in\{1,2\}$., then $\kappa_{s}^{*}(T)\geq t$. The proof is completed.
\end{proof}

By an argument similar to that used above, it is not difficult to obtain the following. We omit the detailed proof. 

\begin{thm} Let $T$ be a tournament with $n$ vertices and $i(T)\leq k$. If $n\geq6t+5k-3$ $(t\geq2)$, then $\kappa_{w}^{*}(T)\geq t+1$.
\end{thm}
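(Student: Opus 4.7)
The approach is to rerun the three-case argument of the preceding (strong) theorem almost verbatim, but replace every invocation of Corollary~\ref{4} (``$4$-strong $\Rightarrow 2^{*}$-strongly connected'') by Proposition~\ref{5} (``strong $\Rightarrow 2^{*}$-weakly connected''). This single substitution buys both the extra path in the container and the slack of $3$ in the hypothesis: maintaining only strong connectivity of the residue after deletion is two units cheaper than maintaining $4$-strongness, so two more intermediate vertices can be absorbed into an additional length-$2$ or length-$3$ path, and the final concatenation uses a weak $2^{*}$-container in place of a strong one.

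I would begin by applying Lemma~\ref{9} to $n\ge 6t+5k-3$ to deduce that $T$ is at least $(2t+k-1)$-strong; since $t\ge 2$ this is $\ge 3$-strong, so Corollary~\ref{3} and Proposition~\ref{5} give that $T$ is $1^{*}$- and $2^{*}$-weakly connected. For each $\omega\in\{3,\dots,t+1\}$ and any two vertices $x,y$ with (WLOG) $xy\in A(T)$, I would then use the partition $A=N^{+}(x)\setminus\{y\}$, $B=N^{-}(y)\setminus\{x\}$, $C=V(T)\setminus(A\cup B\cup\{x,y\})$, split on $s:=|A\cap B|$ into the three cases $s=0$, $s\ge t-1$, and $1\le s\le t-2$, and in each case exhibit $\omega-2$ internally disjoint $(x,y)$-paths of length~$2$ (through $A\cap B$) or length~$3$ (through an $(A\setminus B)\to(B\setminus A)$ arc). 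Deleting their intermediate vertices leaves a residue whose connectivity is at least $k+1\ge 1$, hence strongly connected; Proposition~\ref{5} then supplies a weak $2^{*}$-container of the residue, and concatenation yields the desired weak $\omega^{*}$-container.

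The main obstacle is re-running the counting in Cases~1 and~3: I need that picking $t-1-s$ vertices $u_{i}$ of maximum in-degree in $T[A\setminus B]$ still leaves each $u_{i}$ with at least $t-1-s$ out-neighbors in $B\setminus A$. Applying the averaging trick of the strong proof to the slightly larger subset gives $d^{+}_{A\setminus B}(u_{i})\le\tfrac12(|A\setminus B|+t-s-3)$, and combining with $d^{+}(u_{i})\ge\tfrac12(n-1-k)$, $|A|,|B|\ge\tfrac12(n-3-k)$, and $|C|\le n-2-|A|-\tfrac12(n-3-k)+s$ simplifies the out-count to
\[
d^{+}_{B\setminus A}(u_{i})\;\ge\;\tfrac12\bigl(|A|-t+3-2k-2s\bigr)\;\ge\;t-s
\]
under $n\ge 6t+5k-3$. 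Since $t-s=(t-1-s)+1$, a greedy extraction yields $t-1-s$ internally disjoint length-$3$ paths; the $1$-unit margin here is the weak-case analogue of the ``$\ge t+\tfrac54-s$'' estimate of the strong theorem, and this shift is precisely what fixes the hypothesis at $n\ge 6t+5k-3$.
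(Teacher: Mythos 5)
Your proposal is correct and is essentially the argument the paper intends: the paper omits this proof, saying only that it is ``similar to that used above,'' and your reconstruction carries out exactly that adaptation --- the same three-case split on $s=|A\cap B|$, the same maximum-in-degree averaging in $T[A\setminus B]$, with Proposition~\ref{5} (via Lemma~\ref{9} giving $(2t+k-1)$-strongness and a residue of connectivity $k+1+s\geq 1$) substituted for Corollary~\ref{4}. The arithmetic checks out, in particular $d^{+}_{B\setminus A}(u_{i})\geq\tfrac12(|A|-t+3-2k-2s)\geq t-s\geq t-1-s$ under $n\geq 6t+5k-3$, so the greedy extraction of the $t-1-s$ length-$3$ paths goes through.
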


\section{Acknowledgements}

The second author would like to thank professor Lih-Hsing Hsu for posing the problems considered in the paper. The research is supported by NSFC (No.11671296,  61502330), SRF for ROCS, SEM and Fund Program for the Scientific Activities of Selected
Returned Overseas Professionals in Shanxi Province.

\end{document}